\documentclass[12pt]{article}
\usepackage{amsmath,amsthm,amssymb}
\usepackage[english]{babel} 
\usepackage{array}
\usepackage{rotating}
\usepackage{graphicx}
\usepackage{comment}
\usepackage{pdflscape}
\usepackage{url}
\usepackage[latin1]{inputenc}
\usepackage{tikz}
\usetikzlibrary{shapes,arrows}
\usepackage{textcomp}
\usetikzlibrary{calc}
\usepackage{latexsym}
\usepackage{setspace}

\newcommand\E{{\mathbb E}}

\newcommand\almostsure{\buildrel {\rm a.s.}\over\longrightarrow}

\newcommand\matA{{\bf A}}

\newcommand\vecD{{\bf D}}
\newcommand\vecX{{\bf X}}
\newcommand\vecv{{\bf v}}

\newcommand\normal{{\cal N}}

\newtheorem{lem}{Lemma}[section]
\newtheorem{cor}{Corollary}[section]
\newtheorem{theorem}{Theorem}[section]

\newcommand\convD{{\buildrel {\rm d} \over \longrightarrow}}

\newcommand\Polya{P\' olya}

\begin{document}
\begin{center}
{\Large \bf Degree profile of $m$--ary search trees:\\
A vehicle for data structure compression 

\bigskip
\large\bf Ravi Kalpathy\footnote{Department of Mathematics and Statistics, University of Massachusetts Amherst, Massachusetts~01003, USA. Email: rkalpathy@math.umass.edu} \quad \quad Hosam Mahmoud\footnote{Department of
Statistics, The George Washington University, Washington,
DC~20052, USA. Email: hosam@gwu.edu}}

\end{center}
\section*{Abstract.}
We revisit the random $m$--ary search 
tree and study a finer profile of its node outdegrees with the purpose of exploring possibilities of data structure compression. 
The analysis is done via \Polya\ urns. 
The analysis shows that the number of nodes of 
each individual node outdegree has a phase transition:
Up to $m=26$, the number of nodes of outdegree $k$, for $k=0,1, \ldots, m$,
is asymptotically normal; that behavior changes at $m = 27$.
Based on the analysis, we propose a compact $m$-ary tree  
that offers significant space saving.

\bigskip\noindent
 Keywords: 
Random structure, algorithm, data structure compression,  
\Polya\ urn, Normal limit law, phase transition. 

\bigskip\noindent
MSC: 
60C05,     
68W40     
\section{Introduction}
The $m$--ary search tree is a fundamental branching structure 
that models algorithms
and data structures. For example, the binary search tree serves as a backbone for the analysis of certain searching and sorting algorithms 
(see~\cite{Knuth,Evolutionbook,Sortingbook}). For larger values of $m$,
the $m$--ary search tree
is popular in database applications and is used as a mathematical abstraction for hierarchical data storage (particularly balanced versions, like the $B$--tree, 
see~\cite{Rivest}; Chapter 18).
It is our aim in this article to revisit $m$--ary search 
trees and study a finer profile of node outdegrees, 
with the purpose of exploring possibilities of data structure compression. 
The method will be modeling via \Polya\ urns.
A genesis of these ideas is in~\cite{Devroye}, which conducts a complete
analysis in the binary case.
The urn proposed works ``bottom-up," meaning a model that colors insertion positions that lie on the fringe of the tree. Other possible models
may go via generalized \Polya\ urns, as was done in the recent study
of protected nodes in ternary search trees~\cite{Cecilia},
where the balls of different colors  in the urn 
have various levels of ``activity." In this generalized model the tree is chopped into a large number of shrubs, many of which are not at the bottom (some even include the root), and the shrubs are colored.
\section{Random $m$--ary search trees} 
The $m$-ary search tree is a tree with nodes of outdegrees 
restricted to be at most $m\ge 2$. Each node holds data (called
``keys" in the jargon). A node holds up to $m-1$ keys from an ordered 
domain, such as the real numbers with the usual ordinal relations among them. The keys in a node are kept in sorted
order, say they are arranged from left to right in ascending order. 
If we have $n\le m-1$ keys, they are kept in sorted order in
a root node, which is
a single node in the tree (i.e., it is the root).
If we have $n> m-1$ keys, the first 
$m-1$ among  them go into the root node of the tree (and are stored in sorted order); suppose they are $X_1, \ldots, X_{m-1}$ and after sorting they are 
$X_{(1)} \le X_{(2)} \le \ldots \le X_{(m-1)}$. 
Such order statistics of $m-1$ keys create an $m$-chotomy of the data
domain. For example, if these $m-1$ keys are distinct 
real numbers, they cut the real line
into $m$ segments. 
Subsequent keys are classified according to
their relative order to those in the root.
All keys that fall in the data interval $[X_{(i)}, X_{(i+1)})$ go into the $(i+1)$st subtree,
for $i=0, 1, \ldots, m-1$
(taking $X_{(0)} = -\infty$ and $X_{(m)} = \infty$). Recursively,
a key directed to the $i$th subtree
is subjected to the same insertion algorithm. That is to say, 
an $m$--ary tree is either empty, consists of a single root containing up to $m-1$ sorted keys,  or has a root containing $m-1$ sorted keys and $m$ ordered subtrees that are themselves $m$--ary search trees, with all the keys in the $i$th subtree falling between the $(i-1)$st and $i$th keys of the root.

The standard probability model on data assumes the keys to be $n$ 
real numbers sampled from a common continuous probability distribution, 
or equivalently, their ranks (almost surely) form a random permutation 
of the set $\{1,\ldots, n\}$ (all $n!$ permutations are equally likely); 
see~\cite{Knuth}. For the purpose of analysis,
we can assimilate the data by their ranks.
Figure~\ref{Fig:mary} shows a quaternary tree (4-way tree; $m$--ary tree with $m= 4$) constructed from the permutation $(12, 16,11, 9,  13, 7, 3, 5, 15, 1, 4, 14, 10, 8, 2, 6)$.

\usetikzlibrary{calc,intersections}

\begin{figure}[thb]
\begin{center}
\begin{tikzpicture}
\coordinate (A) at (0,0);
\coordinate (B) at (1.8,0);
\coordinate (C) at (1.8,0.6);
\coordinate (D) at (0,0.6);
\draw [thick] (A)--(B)--(C)--(D)--cycle;

\node[draw=white] at (0.3, 0.3) {$11$};
\node[draw=white] at (0.9, 0.3) {$12$};
\node[draw=white] at (1.5, 0.3) {$16$};

\coordinate (E) at (0.6, 0);
\coordinate (F) at (0.6, 0.6);
\draw [thick] (E)--(F);

\coordinate (G) at (1.2, 0);
\coordinate (H) at (1.2, 0.6);
\draw [thick] (G)--(H);

\coordinate (I) at (-0.3,0);
\coordinate (J) at (2.1,0);
\coordinate (K) at (2.1,-0.6);
\coordinate (L) at (-0.3,-0.6);
\draw [thick] (I)--(J)--(K)--(L)--cycle;

\coordinate (M) at (0.3, 0);
\coordinate (N) at (0.3, -0.6);
\draw [thick]  (M)--(N);

\coordinate (O) at (0.9, 0);
\coordinate (P) at (0.9, -0.6);
\draw [thick] [thick]  (O)--(P);
\draw (M)--(P);
\draw (N)--(O);

\coordinate (Q) at (1.5, 0);
\coordinate (R) at (1.5, -0.6);
\draw [thick]  (Q)--(R);

\draw (J)--(R);
\draw (K)--(Q);


\coordinate (A) at (-2.0,-2);
\coordinate (B) at (-0.2,-2);
\coordinate (C) at (-0.2,-1.4);
\coordinate (D) at (-2.0,-1.4);
\draw [thick] (A)--(B)--(C)--(D)--cycle;

\coordinate (I) at (-2.3,-2);
\coordinate (J) at (0.1,-2);
\coordinate (K) at (0.1,-2.6);
\coordinate (L) at (-2.3,-2.6);
\draw [thick] (I)--(J)--(K)--(L)--cycle;

\node[draw=white] at (-1.7, -1.7) {$3$};
\node[draw=white] at (-1.1, -1.7) {$7$};
\node[draw=white] at (-0.5, -1.7) {$9$};

\coordinate (E) at (-1.4, -2);
\coordinate (F) at (-1.4, -1.4);
\draw [thick] (E)--(F);

\coordinate (G) at (-0.8, -2);
\coordinate (H) at (-0.8 ,-1.4);
\draw [thick]  (G)--(H);

\coordinate (M) at (-1.7, -2.6);
\coordinate (N) at (-1.7, -2.0);
\draw [thick]  (M)--(N);

\coordinate (O) at (-1.1, -2.6);
\coordinate (P) at (-1.1, -2.0);
\draw [thick] (O)--(P);

\coordinate (Q) at (-0.5, -2.6);
\coordinate (R) at (-0.5, -2.0);
\draw [thick] (Q)--(R);


pointers from the node
\coordinate (C1) at (-2, -2.3);
\coordinate (C2) at (-5.6, -3.9);
\draw [thick] (C1)--(C2);

\coordinate (C3) at (-1.4, -2.3);
\coordinate (C4) at (-2.6, -3.9);
\draw [thick] (C3)--(C4);

\coordinate (C5) at (-0.8, -2.3);
\coordinate (C6) at (0.4, -3.9);
\draw [thick] (C5)--(C6);

\coordinate (C7) at (-0.2, -2.3);
\coordinate (C8) at (3.4, -3.9);
\draw [thick]  (C7)--(C8);


\coordinate (A) at (1.0,-2);
\coordinate (B) at (2.8,-2);
\coordinate (C) at (2.8,-1.4);
\coordinate (D) at (1.0,-1.4);
\draw [thick] (A)--(B)--(C)--(D)--cycle;

\coordinate (I) at (0.7,-2);
\coordinate (J) at (3.1,-2);
\coordinate (K) at (3.1,-2.6);
\coordinate (L) at (0.7,-2.6);
\draw [thick] (I)--(J)--(K)--(L)--cycle;

\node[draw=white] at (1.3, -1.7) {$13$};
\node[draw=white] at (1.9, -1.7) {$14$};
\node[draw=white] at (2.5, -1.7) {$15$};

\coordinate (E) at (1.6, -2);
\coordinate (F) at (1.6, -1.4);
\draw [thick] (E)--(F);

\coordinate (G) at (2.2, -2);
\coordinate (H) at (2.2 ,-1.4);
\draw [thick]  (G)--(H);

\coordinate (M) at (1.3, -2.6);
\coordinate (N) at (1.3, -2.0);
\draw [thick]  (M)--(N);

\coordinate (O) at (1.9, -2.6);
\coordinate (P) at (1.9, -2.0);
\draw [thick]  (O)--(P);

\coordinate (Q) at (2.5, -2.6);
\coordinate (R) at (2.5, -2.0);
\draw (Q)--(R);
\draw (K)--(R);
\draw (J)--(Q);
\draw (P)--(Q);
\draw (O)--(R);
\draw (M)--(P);
\draw (O)--(N);
\draw (I)--(M);
\draw (N)--(L);

\coordinate (C1) at (1.2, -0.3);
\coordinate (C2) at (1.9, -1.4);
\draw [thick]  (C1)--(C2);


\coordinate (A) at (-6.5,-4.5);
\coordinate (B) at (-4.7,-4.5);
\coordinate (C) at (-4.7,-3.9);
\coordinate (D) at (-6.5,-3.9);
\draw [thick] (A)--(B)--(C)--(D)--cycle;

\node[draw=white] at (-6.2, -4.2) {$1$};
\node[draw=white] at (-5.6, -4.2) {$2$};

\coordinate (I) at (-6.8, -4.5);
\coordinate (J) at (-4.4,  -4.5);
\coordinate (K) at (-4.4, -5.1);
\coordinate (L) at (-6.8, -5.1);
\draw [thick] (I)--(J)--(K)--(L)--cycle;

\coordinate [thick]  (E) at (-5.9, -4.5);
\coordinate [thick]  (F) at (-5.9, -3.9);
\draw [thick]  (E)--(F);

\coordinate (G) at (-5.3, -4.5);
\coordinate (H) at (-5.3 ,-3.9);
\draw [thick]  (G)--(H);

\coordinate (M) at (-6.2, -4.5);
\coordinate (N) at (-6.2, -5.1);
\draw [thick] (M)--(N);

\coordinate (O) at (-5.6, -4.5);
\coordinate (P) at (-5.6, -5.1);
\draw [thick] (O)--(P);

\coordinate (Q) at (-5.0, -4.5);
\coordinate (R) at (-5.0, -5.1);
\draw [thick] (Q)--(R);

\draw (I)--(N);
\draw (L)--(M);
\draw (O)--(N);
\draw (P)--(M);
\draw (P)--(Q);
\draw (O)--(R);
\draw (J)--(R);
\draw (K)--(Q);


\coordinate (A) at (-3.5,-4.5);
\coordinate (B) at (-1.7,-4.5);
\coordinate (C) at (-1.7,-3.9);
\coordinate (D) at (-3.5,-3.9);
\draw [thick] (A)--(B)--(C)--(D)--cycle;

\coordinate (I) at (-3.8, -4.5);
\coordinate (J) at (-1.4,  -4.5);
\coordinate (K) at (-1.4, -5.1);
\coordinate (L) at (-3.8, -5.1);
\draw [thick] (I)--(J)--(K)--(L)--cycle;

\node[draw=white] at (-3.2, -4.2) {$4$};
\node[draw=white] at (-2.6, -4.2) {$5$};
\node[draw=white] at (-2.0, -4.2) {$6$};

\coordinate (E) at (-2.9, -4.5);
\coordinate (F) at (-2.9, -3.9);
\draw [thick] (E)--(F);

\coordinate (G) at (-2.3, -4.5);
\coordinate (H) at (-2.3 ,-3.9);
\draw (G)--(H);

\coordinate (M) at (-3.2, -4.5);
\coordinate (N) at (-3.2, -5.1);
\draw [thick] (M)--(N);

\coordinate (O) at (-2.6, -4.5);
\coordinate (P) at (-2.6, -5.1);
\draw [thick]  (O)--(P);

\coordinate (Q) at (-2.0, -4.5);
\coordinate (R) at (-2.0, -5.1);
\draw [thick] (Q)--(R);

\draw (I)--(N);
\draw (L)--(M);
\draw (O)--(N);
\draw (P)--(M);
\draw (P)--(Q);
\draw (O)--(R);
\draw (J)--(R);
\draw (K)--(Q);


\coordinate (A) at (-0.5,-4.5);
\coordinate (B) at (1.3,-4.5);
\coordinate (C) at (1.3,-3.9);
\coordinate (D) at (-0.5,-3.9);
\draw [thick] (A)--(B)--(C)--(D)--cycle;

\coordinate (I) at (-0.8, -4.5);
\coordinate (J) at (1.6,  -4.5);
\coordinate (K) at (1.6, -5.1);
\coordinate (L) at (-0.8, -5.1);
\draw [thick] (I)--(J)--(K)--(L)--cycle;

\node[draw=white] at (-0.2, -4.2) {$8$};

\coordinate (E) at (0.1, -4.5);
\coordinate (F) at (0.1, -3.9);
\draw [thick] (E)--(F);

\coordinate (G) at (0.7, -4.5);
\coordinate (H) at (0.7 ,-3.9);
\draw [thick] (G)--(H);

\coordinate (M) at (-0.2, -4.5);
\coordinate (N) at (-0.2, -5.1);
\draw [thick] (M)--(N);

\coordinate (O) at (0.4, -4.5);
\coordinate (P) at (0.4, -5.1);
\draw [thick] (O)--(P);

\coordinate (Q) at (1.0, -4.5);
\coordinate (R) at (1.0, -5.1);
\draw [thick]  (Q)--(R);

\draw (I)--(N);
\draw (L)--(M);
\draw (O)--(N);
\draw (P)--(M);
\draw (P)--(Q);
\draw (O)--(R);
\draw (J)--(R);
\draw (K)--(Q);


\node[draw=white] at (2.8, -4.2) {$10$};

\coordinate (A) at (2.5,-4.5);
\coordinate (B) at (4.3,-4.5);
\coordinate (C) at (4.3,-3.9);
\coordinate (D) at (2.5,-3.9);
\draw [thick] (A)--(B)--(C)--(D)--cycle;

\coordinate (I) at (2.2, -4.5);
\coordinate (J) at (4.6,  -4.5);
\coordinate (K) at (4.6, -5.1);
\coordinate (L) at (2.2, -5.1);
\draw [thick] (I)--(J)--(K)--(L)--cycle;

\coordinate (E) at (3.1, -4.5);
\coordinate (F) at (3.1, -3.9);
\draw [thick] (E)--(F);

\coordinate (G) at (3.7, -4.5);
\coordinate (H) at (3.7,-3.9);
\draw [thick] (G)--(H);

\coordinate (M) at (2.8, -4.5);
\coordinate (N) at (2.8, -5.1);
\draw [thick] (M)--(N);

\coordinate (O) at (3.4, -4.5);
\coordinate (P) at (3.4, -5.1);
\draw [thick] (O)--(P);

\coordinate (Q) at (4, -4.5);
\coordinate (R) at (4, -5.1);
\draw [thick] (Q)--(R);

\draw (I)--(N);
\draw (L)--(M);
\draw (O)--(N);
\draw (P)--(M);
\draw (P)--(Q);
\draw (O)--(R);
\draw (J)--(R);
\draw (K)--(Q);
\coordinate (C1) at (0, -0.3);
\coordinate (C2) at (-1.1, -1.4);
\draw (C1)--(C2);

\end{tikzpicture}
\end{center}
  \caption{A quaternary tree on sixteen keys.}
  \label{Fig:mary}
\end{figure}
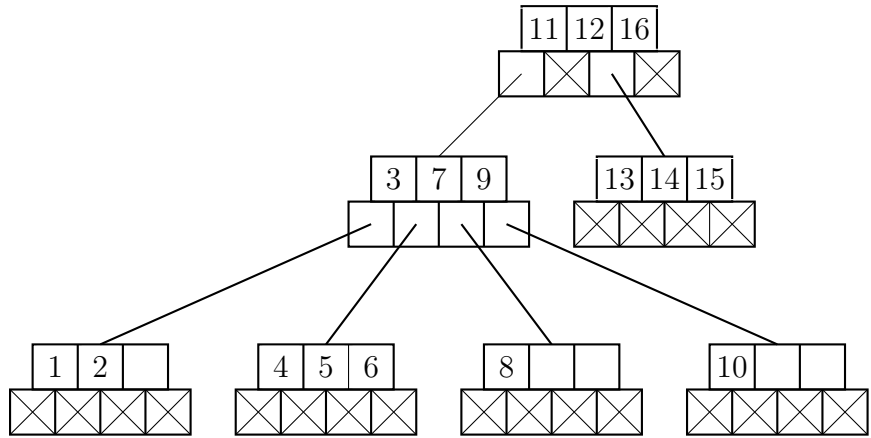

\section{Degree profile}
We characterize in this section the distribution
of nodes with a certain number of children. If a node
has $k$ children, it is of outdegree $k$.
The instrument for this analysis is a \Polya\ urn.
Several urn schemes have been proposed to study 
the nodes of an $m$--ary search trees. There is one in~\cite{Chauvin} 
(also surveyed in~\cite{Polyabook}) crafted for the study of the phase 
change after $m=26$ in the total number of nodes. The space requirement for an $m$--ary search trees for~$m \ge 27$ was studied in \cite{FillKapur} using the contraction method. 
Recently, Holmgren and Janson~\cite{Cecilia}
proposed the use of a generalized \Polya\ urn to study the so called 2-protected nodes (a more formal definition of $k$--protected nodes appears later in this section) in $m$--ary search trees. 
The model we propose
is different from the schemes mentioned. It is a bottom-up urn scheme suited for the study of outdegrees. Modifying
the model naturally requires a new eigenvalue and eigenvector 
analysis.

Let us think of the insertion positions (gaps between keys for additional insertions) 
as colored balls
in an urn. We use a color code with a collection of 
$2m-2$ colors. According to the random permutation model,
all gaps are equally likely positions for the next key insertion.
There may be gaps in nonleaf (internal) nodes. An internal node carries $m-1$
keys. If such an internal node has 
$m$ children, there are no gaps in it.
However, one with fewer children will have 
gaps represented in the data structure
by  {\bf nil} pointers. Consider a gap of this kind with $i-1$ other gaps within a 
node as a type--$i$ gap. We call such a node a node of type $i$, too. These gaps are 
insertion positions at the next level higher up in the tree
above the node containing them.

A leaf containing $i-1$ keys has $i$ gaps between its 
keys, for $i=2, \ldots, m-1$. Consider these $i$ 
gaps to be insertion positions of type--$(m+i-1)$; we represent each
type--$(m+i-1)$ gap with a ball of color $m+i-1$. 
We call such a leaf a node of type--$(m+i-1)$, too.
These gaps are 
insertion positions at the same level in the tree
as the node containing them.

The rules of evolution of the urn are as follows. 
If a ball of color $m+i-1$, for $i= 2, 3, \ldots, m-2$, is picked, the node containing the corresponding gap receives a key. 
The number of keys
in the node goes up by 1, and the number of gaps also goes up by 1.
So, we remove $i$ balls of color $m+i-1$ 
from the urn and add to it $i+1$ balls of color $m+i$.
The rules for balls of the other colors are different.
If a ball (gap) of color $i$ is picked ($i = 2, \ldots, m$), 
a key is inserted in a leaf at the next level in the tree above the node containing the gap.
Such a new leaf contains 1 key (two gaps), i.e., we should remove~$i$ balls of color $i$, add $i-1$ balls of color $i-1$ 
and add two balls of color $m+1$. 
If a ball of color $1$ is picked, 
a key is inserted in a leaf at the next level in the tree above the node containing the gap, no other gaps are left in the node.
Such a new leaf contains one key (two gaps), i.e., we should remove one balls of color $1$, 
and add two balls of color $m+1$.
Lastly, if a gap of color $2m-2$ is picked, the leaf containing them receives a new key and fills out, and $m$ insertion positions appear at the next level of insertion. We remove 
$(m-1)$ balls of color $2m-2$ and add $m$ balls of color $m$ to the urn.

We shall analyze the number $X_n^{(i)}$, for $i = 1, \ldots, 2m-2,$
of gaps of type~$i$ after $n$ insertions.
The tree in Figure~\ref{Fig:mary} has 
$X_n^{(1)} = 0$,
$X_n^{(2)} = 2$,
$X_n^{(3)} = 0$,
$X_n^{(4)} = 8$,
$X_n^{(5)} = 4$, and
$X_n^{(6)} = 3$.

We can represent this ball addition scheme by a replacement matrix $\matA$, the rows
and columns of which are indexed from 1 to $2m-2$. We thus have
$$\matA = \begin{pmatrix}
             -1      &0   &0&\ldots   &0&2&0&0&\ldots&0\\
              1  &-2 &0&\ldots&0&2&0&0&\ldots&0\\
            0      &2   &-3    &\ldots&0&2&0&0&\ldots&0\\
\vdots      &\vdots   &\vdots    &&\vdots&\vdots&\vdots&\vdots&&\vdots\\
 0      &0   &0    &\ldots&0&2&0&0&\ldots&0\\
 0      &0   &0    &\ldots&0&2&0&0&\ldots&0\\
 0      &0   &0    &\ldots&-m&2&0&0&\ldots&0\\
             0      &0   &0    &\ldots&0&-2&3&0&\ldots&0\\
              0      &0   &0    &\ldots&0&0&-3&4& \ldots&0\\
              0      &0   &0    &\ldots&0&0&0&-4& \ldots&0\\
\vdots      &\vdots   &\vdots    &&\vdots&\vdots&\vdots&\vdots&&\vdots\\
                0      &0   &0    &\ldots&0&0&0&0&\ldots&0\\
                0      &0   &0    &\ldots&0&0&0&0&\ldots&(m-1)\\
                 0      &0   &0   &\ldots&m&0&0&0&\ldots&-(m-1)
             \end{pmatrix}. $$ 
The binary tree ($m=2$) is a boundary case with the replacement matrix
$\begin{pmatrix} - 1 &2 \\
                            1 &0\end{pmatrix}$.
              
A successful asymptotic analysis of this urn scheme relies on the 
eigenvalues and eigenvectors, which we take up next. Suppose
we organized the~$m$ eigenvalues of $\matA^T$ (here $T$
denotes the transpose) according to their descending values 
of the real part, i.e., they are of the form
$$\Re\, \lambda_1 \ge \Re\,\lambda_2\ge \ldots \ge \Re\, \lambda_{m-1}.$$
Hence, $\lambda_1$ is the \emph{principal eigenvalue}, the one with the largest
real part. We call the corresponding eigenvector $\vecv$ the 
\emph{principal eigenvector}.

Toward a first-order analysis (strong laws) we only need
the principal eigenvalue and principal eigenvector of $\matA^T$. 
The urn scheme we have is balanced, with constant row sum (equal to 1).
According to a theorem of Perron and 
Frobenius~\cite{Frobenius,Perron}, $\lambda_1 = 1$.
\begin{lem}
\label{Lem:principal}
The principal eigenvector of $\matA^T$ is
$$\vecv =    \begin{pmatrix}
            v_1 \\
            v_2\\
               \vdots\\
                 v_{2m-2}\\
             \end{pmatrix}= \frac 1 {m(m+1)(H_m -1)}\begin{pmatrix}
         1\\
          2\\
         \vdots \\
         m\\
        \frac 1 3 m (m+1)\\
        \frac 1 4 m (m+1)\\
               \vdots\\
              \frac 1 m m (m+1)\\
             \end{pmatrix}. $$
\end{lem}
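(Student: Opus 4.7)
The plan is to verify directly that the claimed vector $\vecv$ satisfies $\matA^T\vecv=\vecv$ and has all positive entries; since Perron--Frobenius (already invoked to conclude $\lambda_1=1$ from balance) guarantees a unique positive eigenvector up to scale, this identifies $\vecv$ as \emph{the} principal eigenvector, with the normalization merely fixing the scale. Positivity is manifest from the formula, so the real work is a column-by-column check $\sum_{i=1}^{2m-2} A_{i,j}\,v_i=v_j$ that exploits the sparsity of $\matA$.

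A generic pattern governs the first block of columns: for $j=1,\ldots,m-1$ the only nonzero entries of column $j$ of $\matA$ are $A_{j,j}=-j$ and $A_{j+1,j}=j$, so the check reduces to $-jv_j+jv_{j+1}=j\bigl(\tfrac{j+1}{C}-\tfrac{j}{C}\bigr)=\tfrac{j}{C}=v_j$ with $C=m(m+1)(H_m-1)$. Two columns are irregular and deserve separate treatment. Column $j=m$ carries $A_{m,m}=-m$ together with $A_{2m-2,m}=m$ (from the rule that converts a saturated leaf into $m$ new insertion positions one level up); plugging in $v_{2m-2}=(m+1)/C$ yields $-m^2/C+m(m+1)/C=m/C=v_m$. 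Column $j=m+1$ has $A_{i,m+1}=2$ for all $i=1,\ldots,m$ together with $A_{m+1,m+1}=-2$; using $\sum_{i=1}^m i=m(m+1)/2$ one obtains $m(m+1)/C-2m(m+1)/(3C)=m(m+1)/(3C)=v_{m+1}$.

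The remaining leaf columns $j=m+2,\ldots,2m-2$ again obey a single uniform pattern: only $A_{j-1,j}=j-m+1$ and $A_{j,j}=-(j-m+1)$ are nonzero, and the partial-fraction identity $\tfrac1{j-m+1}-\tfrac1{j-m+2}=\tfrac1{(j-m+1)(j-m+2)}$ collapses $(j-m+1)(v_{j-1}-v_j)$ to $m(m+1)/\bigl(C(j-m+2)\bigr)=v_j$. The constant $C=m(m+1)(H_m-1)$ is then pinned down by the normalization $\sum_i v_i=1$: the entries of $\vecv$ add up to $\tfrac1C\bigl(\tfrac{m(m+1)}{2}+m(m+1)\sum_{j=3}^m \tfrac1j\bigr)=\tfrac{m(m+1)}{C}(H_m-1)$, which equals $1$ precisely for the stated $C$.

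No deep obstacle appears here; the proof is essentially bookkeeping. The only care required is that the replacement matrix naturally decomposes into two structurally distinct blocks (the ``internal-gap'' colors $1,\ldots,m$ and the ``leaf-gap'' colors $m+1,\ldots,2m-2$) joined through the two exceptional columns $j=m$ and $j=m+1$, so the verification cannot be compressed into a single closed-form line; it must be split at these interfaces, and the indexing between row/column labels and the gap-type interpretation must be tracked consistently throughout.
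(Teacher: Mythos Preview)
Your proposal is correct and follows essentially the same route as the paper: write out the equations $(\matA^T\vecv)_j=v_j$ column by column using the sparsity of $\matA$, verify them against the claimed entries, and then pin down the scaling via $\sum_i v_i=1$. If anything, your treatment is a bit more careful at the interface column $j=m$ (where the nonzero off-diagonal entry sits in row $2m-2$, not row $m+1$), which the paper's wording glosses over.
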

\begin{proof}
We should find the principle eigenvector with length 1. 
That is, we are solving under the constraint 
$\sum_{i=1}^m v_i = 1$.  To determine the principal eigenvector we solve the equation
$\matA^T \vecv= \vecv$. The first $m$ rows give the equations
$$-iv_i + i v_{i+1}= v_i, \qquad i = 1, \ldots, m.$$
Thus, $v_i = i v_1$, for $i = 1, \ldots, m$.
Row $m+1$ gives the equation
$$2v_1 + 2v_2 + \cdots+2 v_m - 2v_{m+1} = v_{m+1}.$$
The following rows give
the equations
$$(i+1)v_{m+i} - (i+1) v_{m+i+1}= v_{m+i}, \qquad \mbox{for\ } i = 2, \ldots, m-2.$$
Solving the equations under the aforementioned length 
constraint, we get
the stated solution.  
\end{proof} 
We appeal to a classic urn result by Athreya and Karlin~\cite{Athreya} 
(see also~\cite{Janson,Smythe}) that relates
the proportion of balls of various colors to the principal eigenvalue and eigenvector. Specialized to our case,
this result translates in the following.
\begin{theorem} 
\label{Theo:Athreya}
Let $X_n^{(i)}$ be the number of gaps of type~$i$ (color $i$) in an
$m$--ary search tree grown from a random permutation, for $i= 1, 2,  \ldots, 2m-2$. 
Let~$\vecX_n$ be the vector with these components. 
We then have\footnote{For random vectors ${\bf Y}_n$ and $\bf Y$,
the notation ${\bf Y}_n \almostsure {\bf Y}$ is for component-wise 
almost sure convergence.}
$$\frac 1 n \, \vecX_n \almostsure \vecv. $$
\end{theorem}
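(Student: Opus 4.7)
The plan is to recognize the sequence $(\vecX_n)_{n \ge 0}$ as the composition vector of a balanced multi-color P\'olya urn with replacement matrix $\matA$ and to invoke the Athreya--Karlin strong law (\cite{Athreya}; see also~\cite{Janson,Smythe}). That theorem asserts: for a tenable balanced urn whose replacement matrix has a simple real principal eigenvalue $\lambda_1$ that strictly dominates the others in real part, the scaled composition vector converges almost surely to the (suitably normalized) principal left eigenvector. Three hypotheses therefore require verification: tenability, dominance and simplicity of $\lambda_1$, and identification of the normalization constant.

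Tenability and balance are essentially immediate. Each ball in the urn is a faithful bookkeeping of a genuine gap in the tree, and every drawing removes precisely those balls that represent the gap just filled, so the counts $X_n^{(i)}$ stay non-negative integers on every sample path. Balance is the statement that every row of $\matA$ sums to $1$, which is visible from the replacement rules and implies $\sum_i X_n^{(i)} = n + O(1)$.

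The spectral check is where the real work lies. Balance already identifies $\lambda_1 = 1$ as an eigenvalue, and Lemma~\ref{Lem:principal} exhibits a strictly positive left eigenvector $\vecv$ associated with it. To obtain simplicity and strict dominance, I would apply the Perron--Frobenius theorem to $\matA^T + c\matI$ for any $c \ge m$, which is non-negative. Irreducibility reduces to strong connectivity of the color-transition digraph, whose edges are $1 \to m+1$; $i \to i-1$ and $i \to m+1$ for $2 \le i \le m$; $m+j \to m+j+1$ for $1 \le j \le m-3$; and $2m-2 \to m$: from any color one ascends the leaf chain $m+1 \to \cdots \to 2m-2$, jumps via $2m-2 \to m$ onto the internal chain, and descends $m \to m-1 \to \cdots \to 1$. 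Primitivity follows from the coexistence of a cycle of length $m$, for example $m \to m+1 \to \cdots \to 2m-2 \to m$, and the long cycle of length $2m-1$ that visits every color, whose gcd is $1$.

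With these ingredients in place, Athreya--Karlin yields $n^{-1} \vecX_n \almostsure c\,\vecv$ for some $c > 0$; the normalization $\sum_i v_i = 1$ established in Lemma~\ref{Lem:principal} (via the identity $\sum_{j=3}^m 1/j = H_m - 3/2$), combined with $n^{-1}\sum_i X_n^{(i)} \to 1$, forces $c = 1$ and completes the argument. The principal obstacle is the dominance/primitivity step: certifying that $\lambda_1 = 1$ is the unique eigenvalue of modulus $1$ requires the cycle-gcd argument in the color digraph, which is the place where the combinatorial content of the replacement rules actually bears on the limit theorem.
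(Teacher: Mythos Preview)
Your approach matches the paper's: the paper does not give a standalone proof of this theorem at all, but simply invokes the Athreya--Karlin strong law for balanced \Polya\ urns and records the specialization. Your write-up does the same, with the added service of spelling out tenability, balance, and the Perron--Frobenius check, so in spirit it is identical to (and more explicit than) what the paper does.

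There is, however, a slip in your primitivity argument. The cycle $m \to m+1 \to \cdots \to 2m-2 \to m$ passes through $m-1$ vertices, hence has length $m-1$, not $m$; and a cycle ``of length $2m-1$ that visits every color'' cannot exist, since there are only $2m-2$ colors (the Hamiltonian cycle $1 \to m+1 \to \cdots \to 2m-2 \to m \to m-1 \to \cdots \to 2 \to 1$ has length $2m-2$). With your two stated lengths one gets $\gcd(m-1,\,2m-2)=m-1$, which is not $1$ for $m\ge 3$, so the argument as written does not establish primitivity. The fix is immediate: for $m\ge 3$ use instead the cycle $m-1 \to m+1 \to m+2 \to \cdots \to 2m-2 \to m \to m-1$, which has length $m$, together with the length-$(m-1)$ cycle above; then $\gcd(m-1,m)=1$ and primitivity follows. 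With that correction your proposal is complete and correct.
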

As a corollary of this theorem, we recover a recent 
result derived in~\cite{Cecilia}.
\begin{cor}  (Holmgren and Janson~\cite{Cecilia}) 
\label{Cor:Cecilia}
Let $L_n$ be the number of leaves in the
$m$--ary search tree grown from a random permutation. We then have
$$\frac 1 n L_n \almostsure \frac {m-1} {2(m+1)(H_m-1)}.$$ 
\end{cor}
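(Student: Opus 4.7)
The plan is to express the leaf count $L_n$ as an explicit linear combination of the gap counts $X_n^{(i)}$, then apply Theorem~\ref{Theo:Athreya} with the eigenvector supplied by Lemma~\ref{Lem:principal}, and finish with a short telescoping identity.

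First I would sort out which ball colors actually sit inside leaves, where a \emph{leaf} means a node with no children. The leaves then split into two classes. ``Full'' leaves carry $m-1$ keys but still have all $m$ of their child slots equal to nil: in the urn these are precisely the nodes contributing balls of color~$m$, each such leaf contributing exactly $m$ balls. ``Partial'' leaves carry $i-1$ keys for some $i \in \{2, \ldots, m-1\}$; each such leaf has $i$ between-key gaps, represented by $i$ balls of color $m+i-1$. No leaf contributes balls of colors $1, \ldots, m-1$, since those colors reside in internal nodes that already own at least one subtree.

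Next I would write
\begin{equation*}
L_n \;=\; \frac{1}{m}\,X_n^{(m)} \;+\; \sum_{i=2}^{m-1} \frac{1}{i}\,X_n^{(m+i-1)},
\end{equation*}
divide by $n$, and invoke Theorem~\ref{Theo:Athreya} to replace $n^{-1}X_n^{(k)}$ by $v_k$. Reading off Lemma~\ref{Lem:principal} gives $v_m/m = 1/(m(m+1)(H_m-1))$ and $v_{m+i-1}/i = 1/(i(i+1)(H_m-1))$ for $i = 2, \ldots, m-1$. The ``full leaves'' contribution $1/(m(m+1))$ slots in perfectly as the $i=m$ term of a sum of $1/(i(i+1))$, so the almost-sure limit reduces to $(H_m-1)^{-1}\sum_{i=2}^{m} 1/(i(i+1))$. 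The partial-fraction telescoping $1/(i(i+1)) = 1/i - 1/(i+1)$ then collapses this sum to $\frac{1}{2} - \frac{1}{m+1} = \frac{m-1}{2(m+1)}$, delivering the claimed constant.

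The one step where mistakes are easy is the initial bookkeeping: it is tempting to count only the ``partial'' leaves and to associate color~$m$ with something internal, because a color-$m$ node will spawn a child as soon as an insertion is directed into it. But at any given instant a color-$m$ node actually has no children, hence is genuinely a leaf; forgetting it would produce the wrong value $(m-2)/(2m(H_m-1))$ rather than $(m-1)/(2(m+1)(H_m-1))$. This is the only conceptual hurdle; everything else is mechanical.
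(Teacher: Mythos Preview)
Your proposal is correct and follows essentially the same approach as the paper: the paper also writes $L_n = \frac{1}{m}X_n^{(m)} + \sum_{i=2}^{m-1} X_n^{(m+i-1)}/i$, applies Theorem~\ref{Theo:Athreya} and Lemma~\ref{Lem:principal}, and then leaves the simplification as ``algebra,'' which you have carried out explicitly via the telescoping sum. Your discussion of the easy-to-miss color-$m$ leaves is a nice piece of exposition that the paper does not dwell on.
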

\begin{proof}
For $i=2, \ldots, m-1$,  groups of size $i$ of  type--$(m+i-1)$ gaps
reside in one node and account for one leaf.  
Also, groups of size $m$ of type--$m$ gaps
account for one leaf. Thus, we have
$$L_n = \frac 1 m X_n^{(m)} + \sum_{i=2}^{m-1} \frac {X_n^{(m+i-1)}} i.$$
Hence, $L_n/ n$ converges almost surely to $v_m/m + \sum_{i=2}^{m-1} v_{m+i-1}/i$.
The statement follows from Lemma~\ref{Lem:principal} after simplifying the algebra.
\end{proof}
Another corollary concerns protected nodes. A node in a tree
is said to be~$k$ protected, if the nearest leaf is at distance 
$k$ (edges). Recently, the interest has surged in protected nodes
in various families of trees (particularly 2--protected nodes); 
see~\cite{Cheon,Du,Cecilia,WardBST,Wardrec,Mansour} for
the counterpart in ordered trees, digital trees, $m$--ary search trees,
binary search trees, recursive trees, and uniform $m$--ary trees, respectively.
A broad view of protected nodes 
covering many tree classes is in~\cite{DevroyeJanson}.
\begin{cor} (Holmgren and Janson~\cite{Cecilia}) 
Let $P_n$ be the number of 1--protected nodes in an
$m$--ary search tree grown from a random permutation. We then have
$$\frac 1 n P_n \almostsure \frac 1 {2(m+1)(H_m-1)}.$$ 
\end{cor}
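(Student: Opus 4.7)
The plan mirrors the proof of Corollary~\ref{Cor:Cecilia}. I will express $P_n$ as an affine combination of the urn counts $X_n^{(i)}$ and the leaf count $L_n$, divide by $n$, and extract the limit from Theorem~\ref{Theo:Athreya} and Lemma~\ref{Lem:principal} (together with Corollary~\ref{Cor:Cecilia} for $L_n$).

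The combinatorial step is the main point. Under the stated definition, a $1$-protected node is a non-leaf. In the colouring of Section~3 the leaves are the type-$m$ nodes (``full leaves,'' with all $m$ child positions \textbf{nil}) together with types $m+1, \ldots, 2m-2$, exactly as used in Corollary~\ref{Cor:Cecilia}. The $1$-protected (internal) nodes therefore comprise the tracked types $1, \ldots, m-1$, each contributing $X_n^{(i)}/i$ nodes, plus the ``full internal'' type-$0$ nodes in which all $m$ children are present. Type-$0$ carries no gap and is invisible to the urn; to access it I would use the tree edge-count identity
\[
T_n - 1 \;=\; \sum_{i=0}^{m-1}(m-i)\,N_i \;=\; m\,P_n - \sum_{i=1}^{m-1} X_n^{(i)},
\]
where $T_n = P_n + L_n$ is the total node count and $N_i$ is the number of type-$i$ nodes (so $i N_i = X_n^{(i)}$ for $1 \le i \le m-1$). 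Solving expresses $P_n$ as an affine function of $L_n$ and $\sum_{i=1}^{m-1} X_n^{(i)}$.

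Dividing by $n$ and applying Theorem~\ref{Theo:Athreya} gives $X_n^{(i)}/n \almostsure v_i = iC$ with $C = 1/\bigl(m(m+1)(H_m-1)\bigr)$ for $i = 1, \ldots, m$, and Corollary~\ref{Cor:Cecilia} supplies the a.s.\ limit of $L_n/n$. Inserting these values and using $\sum_{i=1}^{m-1} i = m(m-1)/2$, a short harmonic simplification should reduce the expression to the stated constant $1/\bigl(2(m+1)(H_m-1)\bigr)$.

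The principal obstacle is the first step: translating ``$1$-protected'' into a linear functional of the urn's tracked colour counts, since the type-$0$ nodes lie outside the urn's direct view and must be reached through the external tree identity above. Once that bookkeeping is in place, the limit follows immediately from the already-established convergences together with routine algebra with the harmonic numbers.
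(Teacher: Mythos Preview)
Your approach is correct but takes a different route from the paper. The paper's proof is two lines: having identified the $1$-protected nodes as the nonleaves, it writes $P_n = S_n - L_n$ with $S_n$ the total node count, imports the limit $S_n/n \almostsure 1/\bigl(2(H_m-1)\bigr)$ from~\cite{Chauvin}, and subtracts the leaf limit of Corollary~\ref{Cor:Cecilia}. You instead bypass the external $S_n$ result via the edge-count identity $T_n - 1 = mP_n - \sum_{i=1}^{m-1} X_n^{(i)}$, which (after substituting $T_n = P_n + L_n$) expresses $P_n$ entirely through quantities whose limits are already furnished by the urn analysis in Theorem~\ref{Theo:Athreya} and Corollary~\ref{Cor:Cecilia}. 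Your argument is thus more self-contained, at the price of a little extra combinatorial bookkeeping; the paper's is shorter but leans on an outside reference for~$S_n$. One caution: when you actually carry out the ``short harmonic simplification,'' both routes yield $1/\bigl((m+1)(H_m-1)\bigr)$, which is twice the printed constant --- the displayed value in the statement appears to be a typo (for $m=2$ the nonleaves of a random binary search tree form a fraction $\sim 2/3$, not $1/3$).
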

\begin{proof}
Let $S_n$ be the number of nodes in the tree.
It is shown in~\cite{Chauvin} that $S_n / n \almostsure  1/ (2(H_m-1))$. The 1--protected nodes are the nonleaves.
Their number is $P_n = S_n - L_n$.
\end{proof}
Another corollary addresses the title of the paper. There is
quite a bit of interest in studying degree profiles of trees;
see~\cite{Drmota,JansonRSA, Kuba}, for example.

The results for $X_n^{(i)}$ can help us find a profile of outdegrees.
Let $D_n^{(k)}$ be the number of nodes of outdegree $k$,
for $k= 0, \ldots, m$.
The tree in Figure~\ref{Fig:mary} has
$D_n^{(0)} = 5$,
$D_n^{(1)} = 0$,
$D_n^{(2)} = 1$,
$D_n^{(3)} = 0$, and
$D_n^{(4)} = 1$.
\begin{cor} 
\label{Cor:degrees}
Let $D_n^{(k)} $ be the number of nodes of outdegree $k$ in an
$m$--ary search tree grown from a random permutation. We then have
\begin{align*}
\frac 1 n D_n^{(0)} &\almostsure \frac {m-1} {2(m+1)(H_m-1)},\\
\frac 1 n D_n^{(k)} &\almostsure \frac 1 {m(m+1)(H_m-1)}, \qquad \mbox{for \ } k =1 , \ldots, m-1,\\
\frac 1 n D_n^{(m)} &\almostsure \frac 1{m(m+1)(H_m-1)}.
\end{align*} 
\end{cor}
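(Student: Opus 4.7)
The plan is to match each outdegree class to the urn's gap variables $X_n^{(i)}$, using Theorem~\ref{Theo:Athreya} and Lemma~\ref{Lem:principal} for the colors that appear in the replacement matrix, and closing the count from above via the total-node asymptotic when a class leaves no trace in the urn. A node of outdegree $k$ is: a leaf when $k=0$; an internal node with $m-1$ keys, $k$ children, and $m-k$ nil pointers (precisely a type-$(m-k)$ node in the coloring of Section~3) when $1\le k\le m-1$; and a fully-occupied internal node, which carries no gap at all, when $k=m$.

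For $k=0$ there is nothing new to do: $D_n^{(0)}=L_n$, and the stated limit is Corollary~\ref{Cor:Cecilia}. For $k\in\{1,\dots,m-1\}$, a type-$(m-k)$ node contributes exactly $m-k$ balls of color $m-k$ to the urn, so
\[
D_n^{(k)} \;=\; \frac{X_n^{(m-k)}}{m-k}.
\]
Dividing by $n$, applying Theorem~\ref{Theo:Athreya}, and then substituting $v_{m-k}=(m-k)/(m(m+1)(H_m-1))$ from Lemma~\ref{Lem:principal} produces the asserted limit $1/(m(m+1)(H_m-1))$ for these middle values of $k$.

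For $k=m$, saturated internal nodes are invisible to the urn, so I would close the count via the total-node asymptotic $S_n/n\almostsure 1/(2(H_m-1))$ quoted from~\cite{Chauvin} in the proof of the previous corollary, combined with the identity
\[
D_n^{(m)} \;=\; S_n \;-\; \sum_{k=0}^{m-1} D_n^{(k)}.
\]
Each term on the right has an almost-sure limit by the preceding two cases. Collecting them over the common denominator $2m(m+1)(H_m-1)$ leaves a numerator $m(m+1)-m(m-1)-2(m-1)=2$, which simplifies to the advertised $1/(m(m+1)(H_m-1))$.

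The only real obstacle is bookkeeping rather than technique: one must remember to divide each $X_n^{(i)}$ by the correct number of balls per node (a type-$j$ internal node deposits $j$ balls of color $j$, and a leaf of color $m+i-1$ deposits $i$ balls), and one must recognize that the replacement matrix only tracks partially-occupied nodes, so the fully-filled outdegree-$m$ class has to be extracted via the $S_n$ closure rather than directly from the Athreya--Karlin principal-eigenvector convergence.
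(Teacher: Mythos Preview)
Your argument is correct and matches the paper's own proof essentially step for step: Corollary~\ref{Cor:Cecilia} for $k=0$, the identity $D_n^{(k)}=X_n^{(m-k)}/(m-k)$ with Lemma~\ref{Lem:principal} for $1\le k\le m-1$, and the closure $D_n^{(m)}=S_n-\sum_{k=0}^{m-1}D_n^{(k)}$ together with $S_n/n\almostsure 1/(2(H_m-1))$ for $k=m$. You go slightly further than the paper by actually writing out the arithmetic for the $k=m$ limit, which the paper leaves to the reader.
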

\begin{proof}
The number of leaves (nodes of outdegree 0) is derived in~\cite{Cecilia},
as discussed in Corollary~\ref{Cor:Cecilia}.
For $i = 1, \ldots, m-1$, every node of outdegree $i$ has $m-i$ gaps of type
$m-i$, and thus we have
$$D_n^{(i)} = \frac {X_n^{(m-i)}} {m-i},$$
and $D_n^{(i)} / n \to \lim_{n\to\infty} \frac {X_n^{(m-i)}} {(m-i)n} = v_{m-i}/(m-i)$,
where $v_{m-i}$ is the $(m-i)$th component of the principal eigenvector 
(cf.\ Lemma~\ref{Lem:principal}).
There is one last node type we have not accounted for with balls in the urn.
These are the full nodes, where every key slot is taken and every pointer is used.
Full nodes  are
of outdegree $m$. These are all the nodes (total $S_n$) excluding nodes of outdegree less than $m$,
and their count is
$$D_n^{(m)} = S_n - \sum_{i=0}^{m-1} D_n^{(i)}.$$
Their limiting proportion is therefore
$$\lim_{n\to\infty}\frac {D_n^{(m)}} n  = \lim_{n\to\infty} 
    \Bigl(\frac {S_n} n - \frac {L_n} n - \sum_{i=1}^{m-1} \frac{D_n^{(i)}}n \Bigr) .$$
We get the limiting proportion $S_n/n$ from~\cite{Chauvin}, and obtain the other elements
of the calculation from Corollary~\ref{Cor:Cecilia} and the already established parts of this proof.
\end{proof}
A main result of this investigation is that the count $D_n^{(k)}$ (for $k=0, \ldots, m$)
has a phase change after $m=26$. For values of $m$
up to 26, the joint distribution of the number of nodes
of various outdegrees is asymptotically multivariate normal in $m$ dimensions.\footnote{The multivariate normal distributions we refer to is what some books call singular
multivariate normal distributions, where $\bf{\Sigma}$ is a singular matrix, but one can find a nonempty subset of these random variables of size $k<m$, that jointly have together a proper multivariate normal distribution of $k$ dimensions.} 
At $m =  27$, the asymptotic distribution is not Gaussian. Such curious phase change was detected in the context of $m$--ary searh trees before, and the number 26 was also the threshold found in~\cite{Pittel}, where the authors analyzed the total amount of space allocated to  a random $m$--ary search trees. 

We shall use $\normal_m({\bf 0}, {\bf \Sigma})$ to denote a centered 
multivariate normal (possibly improper) vector in $m$ dimensions with an $m\times m$ covariance matrix~$\bf \Sigma$, and mean $\bf 0$ (of $m$ components, all 0).
This result holds in view of known urn theory~\cite{Janson,Smythe}, according
to which the number of balls of colors $j$, $j= 1, \ldots, c$, in a $c$--color urn
with certain properties, when appropriately normalized, converges in distribution to a multivariate normal. The theorem needs several conditions, among
which $\Re \, \lambda_2 < \frac 1 2 \Re\, \lambda_1$. All the conditions
hold in our case, except the latter eigenvalue requirement, which holds only
up to~26. The tables below display $\Re\, \lambda_2$ 
(approximated to three decimal 
places), for $m$ up to $27$.
These tables show how $\Re\, \lambda_2$ steadily increases
with~$m$, staying below~$\frac 1 2 \lambda_1 = \frac 1 2$, till $m=26$; at $m=27$
the mid-mark point is crossed.

\bigskip
\begin{sideways}
\begin{tabular}{|>{\begin{turn}{-90}}c<{\end{turn}}|>{\begin{turn}{-90}}c<{\end{turn}}|}
\hline
$\Re\,\lambda_2$ & $m$\\ \hline
$-2.000$ & 2\\ \hline
$-2.000$ & 3\\ \hline
$-2.000$ & 4\\ \hline
$-2.000$ & 5\\ \hline
$-1.768$ & 6\\ \hline
$-1.260$ & 7\\ \hline
$-0.899$ & 8\\ \hline
\end{tabular}
\end{sideways}

\bigskip
\begin{sideways}
\begin{tabular}{|>{\begin{turn}{-90}}c<{\end{turn}}|>{\begin{turn}{-90}}c<{\end{turn}}|}
\hline
$\Re\,\lambda_2$ & $m$\\ \hline
$-0.633$ & 9\\ \hline
$-0.431$ & 10\\ \hline
$-0.273$ & 11\\ \hline
$-0.147$ & 12\\ \hline
$-0.044$ & 13\\ \hline
0.040 & 14\\ \hline
0.112 & 15\\ \hline
0.173 & 16\\ \hline

\end{tabular}
\end{sideways}

\bigskip
\begin{sideways}
\begin{tabular}{|>{\begin{turn}{-90}}c<{\end{turn}}|>{\begin{turn}{-90}}c<{\end{turn}}|}
\hline
$\Re\,\lambda_2$ & $m$\\ \hline
0.226 & 17\\ \hline
0.272 & 18\\ \hline
0.313 & 19\\ \hline
0.348 & 20\\ \hline
0.380 & 21\\ \hline
0.409 & 22\\ \hline
0.435 & 23\\ \hline
0.458 & 24\\ \hline
0.479 & 25\\ \hline
\end{tabular}
\end{sideways}

\bigskip
\begin{sideways}
\begin{tabular}{|>{\begin{turn}{-90}}c<{\end{turn}}|>{\begin{turn}{-90}}c<{\end{turn}}|}
\hline
$\Re\,\lambda_2$ & $m$\\ \hline
0.499 & 26\\ \hline
0.516 & 27\\ \hline
\end{tabular}
\end{sideways}

In formulating a result for the limiting distribution 
we appeal to~\cite{Smythe}, where it is argued that a central limit theorem holds for a \Polya\ urn scheme with replacement matrix $\tilde \matA$, if certain conditions are met. 
The class discussed in~\cite{Smythe} includes
random replacement matrices, where a random number of balls of a certain color may be added.  
We list these conditions here, so that our presentation is self contained:
\begin{itemize}
\item [(a)] The urn scheme is tenable---it is possible to draw balls indefinitely on every stochastic path. 
\item [(b)] The average replacement matrix $\E[\tilde \matA]$ has constant row sums.
\item [(c)] Every entry in $\E[\tilde \matA]$ has a finite second moment.
\item [(d)] The matrix $\E[\tilde \matA]$  has a unique maximal (principal) positive eigenvalue of multiplicity 1 with strictly positive left eigenvector (in all its components).
\end{itemize}
The class of urn schemes meeting these conditions  is called Extended \Polya\ Urns.
Note that our replacement matrix is extended and
meets all these conditions.

Define $\vecv^*$  as the limit of $\vecD_n/ n$.
Namely, by Corollary~\ref{Cor:degrees}, it is

\begin{align*}
{\bf v^*}  =
\frac 1 {(m+1)(H_m-1)}\begin{pmatrix}
 \frac 1 2 (m-1)\\
 \frac 1 {m}\\
 \frac 1 {m}\\
  \vdots\\
 \frac 1{m}
 \end{pmatrix}.
\end{align*} 
\begin{theorem} 
Let $D_n^{(i)}$ be the number of nodes of outdegree $i$ in an
$m$--ary search tree grown from a random permutation, for $i=0, 1, \ldots, m$. Let $\vecD_n$ be the vector with these components.
For $m=3, \ldots, 26$, we have, as $n\to\infty$,  
$$\frac{\vecD_n - n\vecv^*} {\sqrt n}\ \convD\ \normal_m(0, {\bf \Sigma}),$$
for an effectively computable covariance matrix ${\bf \Sigma}$. The covariance matrix can be computed from formulas in~\cite{Janson}.
\end{theorem}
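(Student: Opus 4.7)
The plan is to derive the joint CLT for the ball count vector $\vecX_n$ of the bottom-up urn from the Extended \Polya\ urn central limit theorem of~\cite{Smythe,Janson}, and then push it forward through the explicit affine map that expresses $\vecD_n$ in terms of $\vecX_n$ and $n$ (the same identities that already underlie Corollary~\ref{Cor:degrees}). Because affine images of jointly Gaussian vectors are Gaussian, the stated convergence follows immediately.

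\textbf{Step 1: verifying the hypotheses on the urn.} I check conditions (a)--(d) for $\matA$ directly. Tenability (a) is clear since every color is associated to a genuine insertion gap. The row sums of $\matA$ are all $1$, so (b) holds and $\lambda_1=1$. Condition (c) is trivial because $\matA$ is deterministic. For (d), Perron--Frobenius applied to the nonnegative shift $\matA^T + c\,\matI$ gives a real dominant eigenvalue; simplicity follows from the irreducibility of the color-transition structure (a single positive cycle visits all $2m-2$ colors via the bottom-up rules), and Lemma~\ref{Lem:principal} exhibits the corresponding strictly positive eigenvector $\vecv$. The spectral-gap requirement $\Re\,\lambda_2 < \tfrac12 \lambda_1 = \tfrac12$, the only restriction on $m$, is precisely the content of the displayed numerical tables for $m=3,\ldots,26$. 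The Janson--Smythe theorem therefore yields
\begin{equation*}
\frac{\vecX_n - n\vecv}{\sqrt n}\ \convD\ \normal_{2m-2}({\bf 0}, {\bf \Sigma}_X),
\end{equation*}
with ${\bf \Sigma}_X$ computable from the formulas in~\cite{Janson}.

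\textbf{Step 2: $\vecD_n$ is an affine function of $\vecX_n$.} For $k=1,\ldots,m-1$, Corollary~\ref{Cor:degrees} gives $D_n^{(k)} = X_n^{(m-k)}/(m-k)$, and $D_n^{(0)} = L_n$ is expressed linearly in $\vecX_n$ in the proof of Corollary~\ref{Cor:Cecilia}. For the top degree, count keys by their carrier: a leaf of type $m+i-1$ holds $i-1$ keys, an internal node holds $m-1$ keys, and the number of leaves of type $m+i-1$ is $X_n^{(m+i-1)}/i$; this yields the deterministic identity
\begin{equation*}
n \;=\; (m-1)\bigl(S_n - L_n\bigr) + \sum_{i=2}^{m-1}\frac{i-1}{i}\,X_n^{(m+i-1)}.
\end{equation*}
Solving for $S_n$ and substituting into $D_n^{(m)} = S_n - \sum_{k=0}^{m-1}D_n^{(k)}$ expresses $D_n^{(m)}$ as an affine function of $\vecX_n$ whose only $n$-proportional term is $n/(m-1)$. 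Collecting these identities produces a fixed matrix $M$ and a fixed vector $\vec c$ with
\begin{equation*}
\vecD_n \;=\; M\,\vecX_n + n\,\vec c,
\end{equation*}
and the cancellation $M\vecv + \vec c = \vecv^*$ is exactly the content of Corollary~\ref{Cor:degrees}.

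\textbf{Step 3 and main obstacle.} Substituting the limit from Step~1 into $\vecD_n - n\vecv^* = M(\vecX_n - n\vecv)$ gives
\begin{equation*}
\frac{\vecD_n - n\vecv^*}{\sqrt n}\ \convD\ \normal_m\!\bigl({\bf 0},\,M\,{\bf \Sigma}_X\,M^T\bigr),
\end{equation*}
which is the claim with ${\bf \Sigma} = M\,{\bf \Sigma}_X\,M^T$. The only nonroutine ingredient is condition (d), and within it the actual work is irreducibility: one must trace, color by color, how the bottom-up transitions move positive weight through all $2m-2$ colors so as to rule out an invariant proper subset of the color set. Once irreducibility is in hand, the explicit positive eigenvector of Lemma~\ref{Lem:principal} certifies simplicity of $\lambda_1 = 1$, and the rest is bookkeeping; the explicit evaluation of ${\bf \Sigma}_X$ is handed off to the general formulas of~\cite{Janson}.
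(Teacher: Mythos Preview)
Your approach is the paper's: the theorem is stated without a separate proof, its justification being the immediately preceding discussion that verifies conditions (a)--(d) for the Extended \Polya\ urn and the eigenvalue tables showing $\Re\,\lambda_2<\tfrac12$ for $3\le m\le 26$. Your Steps~2--3, making the affine map $\vecX_n\mapsto\vecD_n$ explicit and pushing the Gaussian limit through it, spell out a transition the paper leaves tacit.

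One small slip to fix in Step~2: your key-counting identity drops the keys held by type--$m$ leaves (those already carrying $m-1$ keys but still of outdegree~$0$; cf.\ the proof of Corollary~\ref{Cor:Cecilia}). The corrected count is
\[
n \;=\; (m-1)(S_n - L_n) + (m-1)\,\frac{X_n^{(m)}}{m} + \sum_{i=2}^{m-1}\frac{i-1}{i}\,X_n^{(m+i-1)},
\]
which simplifies to $(m-1)S_n = n + \sum_{i=2}^{m-1}\frac{m-i}{i}\,X_n^{(m+i-1)}$. This does not affect the argument: $S_n$, and hence $D_n^{(m)}$, is still an affine function of $(\vecX_n,n)$, and the rest of your transfer goes through unchanged.
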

\section{Practical implications}
To implement an $m$--ary tree in practice, one uses blocks of 
data (records) to represent the nodes. Each node should have
$m-1$ data slots and $m$ pointers (places to hold the memory addresses
of records containing the children of that node). If a certain child record does not
exist, the pointer to that child contains a {\bf nil} value that
points to nowhere specific, indicating an empty subtree. 
For instance, a ternary tree ($m=3$) designed
to hold real numbers can be crafted from two data slots for keys,
and three pointers. In pseudo code this might look like

\bigskip
\vbox { \baselineskip=12pt\obeylines        
           { {\bf Type}        
              \qquad     pointer = $\uparrow$node;        
              \qquad     node = {\bf record}        
        \qquad      \qquad         smallernumber: {\bf real};  
       \qquad      \qquad         largerernumber: {\bf real};       
        \qquad      \qquad         left:   {\bf pointer};        
        \qquad      \qquad         middle:  {\bf pointer};        
        \qquad      \qquad         right:   {\bf pointer}        
        \qquad {\bf end};\bigskip}}  

On a typical small modern computer (such as a desktop PC or a portable Mac)  
single-precision real data are represented 
in one word 
(4 bytes, 8 bits each) and pointers may be four bytes each. The random access memory (RAM) space is 4 Gigabytes ($2^{32}$ bytes), and 4 bytes will specify any address in the RAM.  

The analysis we went through shows possibilities for data structure compression.
We can think of a data structure with multiple types of nodes.
Full nodes with $m$ children are of the usual type ($m-1$ keys and $m$ pointer).
However, saving memory space can take effect in nodes
with fewer than $m$ children.
We can eliminate~$m$ pointers from each leaf of type--$(m+i-1)$ node, 
for $i=2, \ldots, m-1$,  and eliminate
$m-i$ key slots. The gaps 
of color~$m$ also fall in this category.  

As we have many node types, we need a \emph{descriptor}
in each node to tell its type, which we suppose needs $\delta_m$ bytes. 
This descriptor should be long enough to distinguish $2m-2$ types of nodes.
For instance, with $m$ up to~256, one byte ($\delta_m=1$) on any modern computer is sufficient to encode all the node types.
In what follows we suppose each pointer needs $p$ bytes, and each key needs $k$ bytes. 

A leaf containing $i$ gaps
has $i-1$ keys,  and no children. These are the gaps of  colors $m, m+1, \ldots, 2m-2$. The 
pointers in the usual implementation are superfluous and can be eliminated. Such a node can be presented in a frugal leaf containing only space 
for $i-1$ keys and the descriptor. 

An internal node with children but fewer than $m$ 
has unused pointers, but
all the key slots are occupied.
For nodes of type $i$ (for $i = 1, \ldots, m-1$), we can eliminate $i$ pointers.
However, for insertion, searching and data retrieval algorithms to operate correctly, an algorithm reaching such a node should know where
the nonnull pointers are (which ones are occupied). This needs a bitmap (a secondary
descriptor) of length $m$ bits of ones and zeros, with zeros corresponding
to $\bf nil$ pointers, and ones corresponding to actively engaged nexuses pointing
to nonempty subtrees.  We suppose the bitmap needs $\Delta_m$ bytes. 

There is one last node type that needs more space---the full nodes. There is $D_n^{(m)}$ of them and they do not need a bitmap.
Let us consider these as nodes of an additional type: type--$(2m-1)$. 
The space needed for these full nodes is 
$$\bigl(\delta_m + (m-1) k + m p\bigr)  {D_n^{(m)}}. $$
\usetikzlibrary{calc,intersections}

The compression scheme is illustrated in Figure~\ref{Fig:compactmary}.
This is the same tree of Figure~\ref{Fig:mary} with all {\bf nil} pointers and unused 
key slots removed. Notice that some nodes in the compact 
tree are tagged on the left with
numbers in circles. The circle is the space required for the primary descriptor (needed
for all the nodes). In this example, $m=4$ and we have $2m-1 = 7$ node types. One byte is sufficient to represent whole numbers up to 7. For instance, the root node in
Figure~\ref{Fig:mary} has two unused pointers. So, it is of type 2, the number used in the one-byte left tag of the root node of the compact tree in Figure~\ref{Fig:compactmary}. The leftmost node on level one in the tree is a full node (type--7), we need to keep all its components 
in the compact tree, and tag it with 7 on the left. Nodes of types $1, \ldots, m-1$ are further 
tagged with a secondary descriptor to store the bitmap. 
In the compact tree of Figure~\ref{Fig:compactmary}, we only have
one node of the first three types (the root node). Only the leftmost pointer and the third from  the left need to be preserved; the bitmap is 1010. Four bits can be represented in one byte. This number is 10 in decimal, which is the number in the one-byte descriptor tagging the root on the right side.

\begin{figure}[thb]
\begin{center}
\begin{tikzpicture}
\coordinate (A) at (0,0);
\coordinate (B) at (1.8,0);
\coordinate (C) at (1.8,0.6);
\coordinate (D) at (0,0.6);
\draw [thick] (A)--(B)--(C)--(D)--cycle;

\node[draw=white] at (0.3, 0.3) {$11$};
\node[draw=white] at (0.9, 0.3) {$12$};
\node[draw=white] at (1.5, 0.3) {$16$};

\node[draw=white] at (-0.3, 0.3) {$2$};
\draw  [thick](-0.3,0.3) circle (8pt);

\node[draw=white] at (2.1, 0.3) {$10$};
\draw  [thick](2.1,0.3) circle (8pt);

\coordinate (E) at (0.6, 0);
\coordinate (F) at (0.6, 0.6);
\draw [thick] (E)--(F);

\coordinate (G) at (1.2, 0);
\coordinate (H) at (1.2, 0.6);
\draw [thick] (G)--(H);

\coordinate (I) at (-0.3,0);
\coordinate (J) at (0.3,0);
\coordinate (K) at (0.3,-0.6);
\coordinate (L) at (-0.3,-0.6);
\draw [thick] (I)--(J)--(K)--(L)--cycle;

\coordinate (M) at (0.3, 0);
\coordinate (N) at (0.3, -0.6);
\draw [thick]  (M)--(N);

\coordinate (O) at (0.9, 0);
\coordinate (P) at (0.9, -0.6);
\draw [thick] [thick]  (O)--(P);

\coordinate (Q) at (1.5, 0);
\coordinate (R) at (1.5, -0.6);
\draw [thick]  (Q)--(R);
\draw [thick] (P)--(R);


\coordinate (A) at (-2.0,-2);
\coordinate (B) at (-0.2,-2);
\coordinate (C) at (-0.2,-1.4);
\coordinate (D) at (-2.0,-1.4);
\draw [thick] (A)--(B)--(C)--(D)--cycle;

\coordinate (I) at (-2.3,-2);
\coordinate (J) at (0.1,-2);
\coordinate (K) at (0.1,-2.6);
\coordinate (L) at (-2.3,-2.6);
\draw [thick] (I)--(J)--(K)--(L)--cycle;

\node[draw=white] at (-1.7, -1.7) {$3$};
\node[draw=white] at (-1.1, -1.7) {$7$};
\node[draw=white] at (-0.5, -1.7) {$9$};

\node[draw=white] at (-2.3, -1.7) {$7$};
\draw  [thick](-2.3,-1.7) circle (8pt);

\coordinate (E) at (-1.4, -2);
\coordinate (F) at (-1.4, -1.4);
\draw [thick] (E)--(F);

\coordinate (G) at (-0.8, -2);
\coordinate (H) at (-0.8 ,-1.4);
\draw [thick]  (G)--(H);

\coordinate (M) at (-1.7, -2.6);
\coordinate (N) at (-1.7, -2.0);
\draw [thick]  (M)--(N);

\coordinate (O) at (-1.1, -2.6);
\coordinate (P) at (-1.1, -2.0);
\draw [thick] (O)--(P);

\coordinate (Q) at (-0.5, -2.6);
\coordinate (R) at (-0.5, -2.0);
\draw [thick] (Q)--(R);


pointers from the node
\coordinate (C1) at (-2, -2.3);
\coordinate (C2) at (-5.6, -3.9);
\draw [thick] (C1)--(C2);

\coordinate (C3) at (-1.4, -2.3);
\coordinate (C4) at (-2.6, -3.9);
\draw [thick] (C3)--(C4);

\coordinate (C5) at (-0.8, -2.3);
\coordinate (C6) at (0.4, -3.9);
\draw [thick] (C5)--(C6);

\coordinate (C7) at (-0.2, -2.3);
\coordinate (C8) at (3.4, -3.9);
\draw [thick]  (C7)--(C8);

\node[draw=white] at (0.7, -1.7) {$4$};
\draw  [thick](0.7,-1.7) circle (8pt);

\coordinate (A) at (1.0,-2);
\coordinate (B) at (2.8,-2);
\coordinate (C) at (2.8,-1.4);
\coordinate (D) at (1.0,-1.4);
\draw [thick] (A)--(B)--(C)--(D)--cycle;

\node[draw=white] at (1.3, -1.7) {$13$};
\node[draw=white] at (1.9, -1.7) {$14$};
\node[draw=white] at (2.5, -1.7) {$15$};

\coordinate (E) at (1.6, -2);
\coordinate (F) at (1.6, -1.4);
\draw [thick] (E)--(F);

\coordinate (G) at (2.2, -2);
\coordinate (H) at (2.2 ,-1.4);
\draw [thick]  (G)--(H);

\coordinate (C1) at (1.2, -0.3);
\coordinate (C2) at (1.9, -1.4);
\draw [thick]  (C1)--(C2);

\node[draw=white] at (-6.5, -4.2) {$6$};
\draw  [thick](-6.5,-4.2) circle (8pt);

\coordinate (A) at (-6.2,-4.5);
\coordinate (B) at (-5.0,-4.5);
\coordinate (C) at (-5.0,-3.9);
\coordinate (D) at (-6.2,-3.9);
\draw [thick] (A)--(B)--(C)--(D)--cycle;

\node[draw=white] at (-5.9, -4.2) {$1$};
\node[draw=white] at (-5.3, -4.2) {$2$};

\coordinate [thick]  (E) at (-5.6, -4.5);
\coordinate [thick]  (F) at (-5.6, -3.9);
\draw [thick]  (E)--(F);


\node[draw=white] at (-3.8, -4.2) {$4$};
\draw  [thick](-3.8,-4.2) circle (8pt);

\coordinate (A) at (-3.5,-4.5);
\coordinate (B) at (-1.7,-4.5);
\coordinate (C) at (-1.7,-3.9);
\coordinate (D) at (-3.5,-3.9);
\draw [thick] (A)--(B)--(C)--(D)--cycle;

\node[draw=white] at (-3.2, -4.2) {$4$};
\node[draw=white] at (-2.6, -4.2) {$5$};
\node[draw=white] at (-2.0, -4.2) {$6$};

\coordinate (E) at (-2.9, -4.5);
\coordinate (F) at (-2.9, -3.9);
\draw [thick] (E)--(F);

\coordinate (G) at (-2.3, -4.5);
\coordinate (H) at (-2.3 ,-3.9);
\draw (G)--(H);

\node[draw=white] at (-0.2, -4.2) {$5$};
\draw  [thick](-0.2,-4.2) circle (8pt);

\coordinate (A) at (0.1,-4.5);
\coordinate (B) at (0.7,-4.5);
\coordinate (C) at (0.7,-3.9);
\coordinate (D) at (0.1,-3.9);
\draw [thick] (A)--(B)--(C)--(D)--cycle;

\node[draw=white] at (0.4, -4.2) {$8$};

\node[draw=white] at (3.4, -4.2) {$10$};
\node[draw=white] at (2.8, -4.2) {$5$};
\draw  [thick](2.8,-4.2) circle (8pt);

\node[draw=white] at (3.4, -4.2) {$10$};

\coordinate (A) at (3.1,-4.5);
\coordinate (B) at (3.7,-4.5);
\coordinate (C) at (3.7,-3.9);
\coordinate (D) at (3.1,-3.9);
\draw [thick] (A)--(B)--(C)--(D)--cycle;

\coordinate (E) at (3.1, -4.5);
\coordinate (F) at (3.1, -3.9);
\draw [thick] (E)--(F);

\coordinate (G) at (3.7, -4.5);
\coordinate (H) at (3.7,-3.9);
\draw [thick] (G)--(H);

\coordinate (C1) at (0, -0.3);
\coordinate (C2) at (-1.1, -1.4);
\draw (C1)--(C2);

\end{tikzpicture}
\end{center}
  \caption{A compact quaternary tree on sixteen keys.}
  \label{Fig:compactmary}
\end{figure}
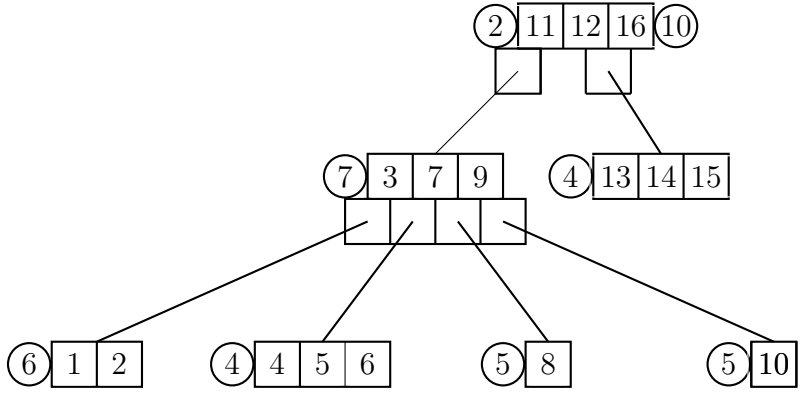

The actual physical size (in bytes) of the modified tree is
\begin{align*}
S_n' &:=   \bigl(\delta_m + (m-1) k + mp)  {D_n^{(m)}}   \\
      &\qquad {} + \sum_{i=1}^{m-1} \bigl(\delta_m
                 \Delta_m  +(m-1)k + (m-i) p\bigr)\frac{ X_n^{(i)}} i \\
        &\qquad {} + \bigl(\delta_m+ (m-1) k  \bigr) \, \frac {X_n^{(m)}} m + \sum_{i=2}^{m-1} \bigl(\delta_m+ (i-1) k  \bigr) \, \frac {X_n^{(m+i-1)}} i.
\end{align*}
On the other hand, a plain implementation, allocating a total of $S_n$
nodes (recall $S_n$ in the proof of  Corollary~\ref{Cor:Cecilia}) would use $(mp + (m-1)k)S_n$ bytes.
The relative size of the proposed modified frugal structure is then
\begin{align*}
\frac {S_n'} {(mp+(m-1)k)S_n} &\almostsure    \frac 1 {2m(m+1)(H_m-1)}
      (2m^2 k H_{m}+m^2 \delta_m\\
    &\qquad\qquad {}-2m^2 k+m^2 p+2mk H_{m}+m \delta_m+2m \Delta_m\\
    &\qquad\qquad {}+mp-2mk-2\Delta_m) \Big / \Bigl(\frac{mp+(m-1)k} {2(H_m-1)} \Bigr).
\end{align*} 

Recall that $\delta_m$ should be big enough to encode $2m-2$ node types. 
For that we need
at least $\log_2 (2m-2)$ bits. 
Computers cannot access individual bytes, and we must round up
the primary descriptor's space to the nearest number of bytes, 
which is $\delta_m = \lceil\frac 1 b \log_2 (2m-2)\rceil$, where
$b$ is the number of bits per byte.
The secondary descriptor $\Delta_m$ is a bitmap of length $m$,
and thus needs $\Delta_m = \lceil m / b\rceil$ bytes. 
Plugging in these descriptors' values and simplifying, we see that the relative size satisfies
$$\frac {S_n'} {(mp+(m-1)k)S_n} \almostsure \frac {(2k+b)\ln m }{(k+p) m},$$
offering a considerable saving for large $m$. 

In the small computer environment mentioned above, $k=4$, $p = 4$, and~$b=8$. 
The table below lists the relative asymptotic size (approximated to three decimal places)
of the modified tree to the asymptotic 
size under a plain implementation. The figures show how quickly the relative size
comes down. For instance, at $m=10$, the space saving is about $73\%$. 

In conclusion, we remark that by construction the reduction scheme is the best possible space saving that could be achieved, and that the reduction is achieved on average for all values of $m\ge 2$, such as the typical values in the hundreds commonly used in large database applications. However, for values of~$m$ greater than 26 the variability may be too much to predict that near-average saving is attained most of the time. 

\bigskip
\begin{sideways}
\begin{tabular}{|>{\begin{turn}{-90}}c<{\end{turn}}|>{\begin{turn}{-90}}c<{\end{turn}}|}
\hline
Relative size & $m$\\ \hline
0.778 & 2\\ \hline
0.600 & 3\\ \hline
0.499& 4\\ \hline
0.431& 5\\ \hline
0.383& 6\\ \hline
0.345& 7\\ \hline
0.316& 8\\ \hline
0.294& 9\\ \hline
\end{tabular}
\end{sideways}

\bigskip
\begin{sideways}
\begin{tabular}{|>{\begin{turn}{-90}}c<{\end{turn}}|>{\begin{turn}{-90}}c<{\end{turn}}|}
\hline
Relative size & $m$\\ \hline
0.273& 10\\ \hline
0.256& 11\\ \hline
0.240& 12\\ \hline
0.227& 13\\ \hline
0.215& 14\\ \hline
0.205& 15\\ \hline
0.196& 16\\ \hline
0.188& 17\\ \hline
\end{tabular}
\end{sideways}

\bigskip
\begin{sideways}
\begin{tabular}{|>{\begin{turn}{-90}}c<{\end{turn}}|>{\begin{turn}{-90}}c<{\end{turn}}|}
\hline
Relative size & $m$\\ \hline
0.180 & 18\\ \hline
0.173 & 19\\ \hline
0.167 & 20\\ \hline
0.161 & 21\\ \hline
0.156 & 22\\ \hline
0.151 & 23\\ \hline
0.146 & 24\\ \hline
0.142 & 25\\ \hline
\end{tabular}
\end{sideways}

\bigskip
\begin{sideways}
\begin{tabular}{|>{\begin{turn}{-90}}c<{\end{turn}}|>{\begin{turn}{-90}}c<{\end{turn}}|}
\hline
Relative size & $m$\\ \hline
0.138 & 26\\ \hline
0.134 & 27\\ \hline
\end{tabular}
\end{sideways}
\section*{Acknowledgments} We thank Cecilia Holmgren and Svante Janson for several valuable remarks and helpful comments.


\begin{thebibliography}{99} 
\bibitem {Athreya} 
        Athreya, K.\ and Karlin, S.\ (1968). 
        Embedding of urn schemes into continuous 
        time Markov branching process and related limit theorems. 
        \emph{The Annals of Mathematical Statistics}, 
        {\bf 39}, 1801--1817. 
\bibitem  {Chauvin} Chauvin, B.\ and Pouyanne, N.\ (2004).
      $m$--ary Search trees when $m \ge 27$: 
      A strong asymptotics for the space requirements. 
      \emph{Random Structures and Algorithms},
      {\bf 24}, 133--154.     
\bibitem {Cheon} Cheon, G.\ and Shapiro, L.\ (2008)
     Protected points in ordered trees.
     \emph{Applied Mathematics Letters},
     {\bf 21}, 516--520.
\bibitem{Rivest}
      Cormen, T., Leiserson, C., Rivest, R.\ and  Stein, C.\ (2001).
      \emph{Introduction to Algorithms}, 2nd Ed. 
      McGraw-Hill, New York.
\bibitem {Devroye} Devroye, L.\ (1991).
       Limit laws for local counters in random binary search trees.
       \emph{Random Structures \& Algorithms}, {\bf 2}, 303--315.
\bibitem{DevroyeJanson}
      Devroye, L.\ and Janson, S.\ (2014).
      Protected nodes and fringe subtrees in some random trees.
      \emph{Electronic Communications in Probability}, {\bf 19}, Article 6, 1--10.
\bibitem {Drmota}
       Drmota, M., Gittenberger, B. and Panholzer, A. (2012).
       The degree distribution of thickened trees.
       In \emph {DMTCS Proceedings, 
       Fifth Colloquium on Mathematics and Computer Science},
       {\bf AI}, {149--162}.
\bibitem {Du} Du, R.\ and Prodinger, H.\ (2012).
          Notes on protected nodes in digital search trees.
\emph{Applied Mathematics Letters},
    {\bf 25}, {1025--1028}. 
\bibitem {FillKapur} 
       Fill, J., Kapur, N. (2004).
       The space requirement of $m$--ary search trees: distributional asymptotics for $m \ge 27$. 
       In \emph {Proceedings of the 7th Iranian Statistical Conference}, Tehran, Iran. 
\bibitem {Frobenius}
      Frobenius, G.\ (1912). \" Uber Matrizen aus nicht negativen elementen.        
      \emph{Sitzungsber, K\" onigl. Preuss. Akad. Wiss.} 456--477.
\bibitem{Cecilia}
       Holmgren, C.\ and Janson, S.\ (2014+).
       Asymptotic distribution of two-protected nodes 
       in ternary search trees (manuscript).
\bibitem {Janson} Janson, S.\ (2004). Functional limit theorems for multitype 
      branching processes and generalized \Polya\ urns.
      \emph{Stochastic Processes and  Applications}, 
       {\bf 110}, 177--245.
\bibitem {JansonRSA} Janson, S. (2005).
              Asymptotic degree distribution in random recursive trees.    
              \emph{Random Structures \& Algorithms},  
              {\bf 26}, 69--83.  
\bibitem {Knuth} Knuth, D.\ (1998).
         \emph{The Art of Computer Programming, Vol. 3: Sorting and Searching, 3rd Ed}. 
          Addison-Wesley, Reading, Massachusetts.
\bibitem{Kuba} 
         Kuba, M.\ and Panholzer, A. \ (2007).
         On the degree distribution of the nodes in increasing trees.
         \emph{Journal of Combinatorial Theory, Series~A},
         {\bf 114},  597--618.
\bibitem{Evolutionbook} 
    Mahmoud, H.\ (1992).
    \emph{Evolution of Random Search Trees}.
    Wiley, New York.
\bibitem {Sortingbook}
            Mahmoud, H.\ (2000).  \emph{Sorting: A Distribution Theory}. 
            Wiley, New York. 
\bibitem{Polyabook} 
    Mahmoud, H.\ (2008).
    \emph{\Polya\ Urn Models}.
    Chapman-Hall, Orlando, Florida.
\bibitem {Pittel} 
	Mahmoud, H.\ and Pittel, B.\ (1989).
       Analysis of the space of search trees under the random insertion algorithm.
       \emph{Journal of Algorithms}, {\bf 10}, 52--75. 
\bibitem {WardBST}
           Mahmoud, H.\ and Ward, M. \ (2012).
           Asymptotic distribution of two-protected nodes 
           in random binary search trees.
           \emph{Applied Mathematics Letters}, {\bf 25}, 2218--2222.
\bibitem{Wardrec} Mahmoud, H.\ and Ward, M.\ (2014).  
     Asymptotic properties of protected nodes in random recursive trees. 
     \emph{Journal of Applied Probability} (accepted).
\bibitem {Mansour} Mansour, T.\ (2011).
         Protected points in $k$-ary trees. 
         \emph{Applied Mathematics Letters}, {\bf 24}, 478--480.
\bibitem {Perron}
        Perron, O.\ (1907). Zur theorie der matrices. 
        \emph{ Mathematische Annalen}, {\bf 64},  248--263.
\bibitem{Smythe} Smythe, R.\ (1996). Central limit theorems for urn models.
          \emph{Stochastic Processes and Their Applications}, 
          {\bf 65}, 115--137.  
\end{thebibliography}
\end{document}